\documentclass[a4paper,11pt]{article}
\usepackage[T1]{fontenc}
\usepackage[utf8]{inputenc}
\usepackage[english]{babel}
\usepackage{amsmath,amssymb,amsthm}
\usepackage{enumerate}
\usepackage{indentfirst}
\usepackage[hidelinks,linktocpage=true]{hyperref}
\usepackage{url}
\hypersetup{
  pdftitle={A finite forbidden family with superlinear surplus and no three-factor product extremizers},
  pdfauthor={Chuandong Xu}
}
\newtheorem{thm}{Theorem}[section]

\newtheorem{problem}[thm]{Problem}

\newtheorem{lem}[thm]{Lemma}

\newtheorem{cor}[thm]{Corollary}
\newtheorem{pro}[thm]{Proposition}
\newcommand{\ex}{\operatorname{ex}}
\newcommand{\EX}{\operatorname{EX}}
\newcommand{\Free}{\operatorname{Free}}

\title{\textbf{A finite forbidden family with superlinear surplus and
no three-factor product extremizers}}
\author{Chuandong Xu\footnote{School of
Mathematics and Statistics, Xidian University, Xi'an, 710071, China.
\href{mailto:xuchuandong@xidian.edu.cn}{\nolinkurl{xuchuandong@xidian.edu.cn}}.
Supported by the National Natural Science
Foundation of China (No. 12571378).}}
\date{\today}
\begin{document}
\maketitle
\begin{abstract}
We construct a fixed finite family $\mathcal L$ of ordinary forbidden
subgraphs with $p(\mathcal L)=3$ and a constant $c>0$ such that
\[
   \ex(n,\mathcal L)>t_3(n)+cn^{3/2}
\]
at every sufficiently large order.  Nevertheless, the complement of
every sufficiently large $\mathcal L$-extremal graph has at most two
connected components.  In particular, no such extremal graph is a
complete join of three graphs of positive order.  This gives a negative answer to a natural
existence-only question motivated by the Simonovits Product Conjecture, in which one
asks only for one product extremizer at each sufficiently large order.
\end{abstract}
\noindent\textbf{Keywords:} Tur\'{a}n number; forbidden family;
Simonovits Product Conjecture.


\section{Introduction}

All graphs in this paper are finite and simple and are considered up to
isomorphism; containment always means ordinary, not necessarily induced,
subgraph containment.  For a nonempty family
$\mathcal F$, let $\ex(n,\mathcal F)$ be the maximum number of edges in an
$n$-vertex $\mathcal F$-free graph and let $\EX(n,\mathcal F)$ be the set of
extremal graphs; at order zero, these have their usual empty-graph meanings.
We write
\[
 p(\mathcal F)=\min_{F\in\mathcal F}\chi(F)-1
 \quad\hbox{and}\quad
 t_r(n)=e(T_{n,r}),
\]
where $T_{n,r}$ is the balanced complete $r$-partite graph.  The Erd\H{o}s--Stone--Simonovits theorem gives
$\ex(n,\mathcal F)=t_{p(\mathcal F)}(n)+o(n^2)$ whenever
$p(\mathcal F)\geq 2$ \cite{ES46,ES66}; the structure hidden in the error term is the subject
of the Simonovits Product Conjecture and its variants
\cite{ES46,FS13,Simonovits83,Simonovits97,SS19}.

For vertex-disjoint graphs $G_1,\ldots,G_r$, their \emph{complete join}
$G_1\otimes\cdots\otimes G_r$ is obtained by adding every edge between
distinct factors.  If $\kappa(H)$ denotes the number of connected components
of $H$, then a graph $G$ admits such a representation with $r$ factors of
positive order precisely when
\[
 G\cong G_1\otimes\cdots\otimes G_r
 \text{ for some $G_1,\ldots,G_r$ with $v(G_i)>0$}
 \quad\Longleftrightarrow\quad
 \kappa(\overline G)\geq r.
 \tag{1.1}
\]
The usual product statements ask for considerably more than (1.1): the
factors should come from fixed local extremal classes, and often every
extremal graph should admit such a representation.  F\"uredi and Simonovits
suggested in Remark~2.9(c) of \cite{FS13} (see also
Conjecture~1.2 in \cite{Xu26}) that, even when uniqueness fails,
there might still be a product extremizer at each sufficiently large order.
Because the local families in that remark are left informal, we isolate only
its profile-free existence content.

\begin{problem}[F\"{u}redi and Simonovits \cite{FS13}]\label{prob:existence}
Let $\mathcal F$ be a fixed nonempty finite family with $p=p(\mathcal F)\geq2$.
Suppose that for some $c>0$ and $0<\varepsilon<1$,
\[
 \ex(n,\mathcal F)>t_p(n)+c n^{1+\varepsilon}
\]
for every sufficiently large $n$.  Must every sufficiently large order admit
some $G\in\EX(n,\mathcal F)$ with $\kappa(\overline G)\geq p$?
\end{problem}

The distinction between ``some'' and ``every'' is substantive.  Earlier
constructions produce nonproduct extremizers with bounded surplus; see, for
example, \cite{GST98,PSY26,Simonovits83}.  The superlinear construction in
\cite{Xu26} gives a nonproduct extremizer at every sufficiently large order
but also has a product equality branch.  Problem~\ref{prob:existence} asks
whether some product branch must always survive.  Our main theorem answers
this precise question negatively.

\begin{thm}\label{thm:main}
There exist a fixed nonempty finite family $\mathcal L_{180}$, a constant
$c>0$, and an integer $N$ such that $p(\mathcal L_{180})=3$ and, for every $n\geq N$,
\[
 \ex(n,\mathcal L_{180})>t_3(n)+c n^{3/2},
 \tag{1.2}
\]
whereas every $G\in\EX(n,\mathcal L_{180})$ satisfies
\[
 \kappa(\overline G)\leq2.
 \tag{1.3}
\]
In particular, no $\mathcal L_{180}$-extremal graph of sufficiently large
order is a complete join of three graphs of positive order.
\end{thm}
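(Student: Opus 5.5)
We construct the family and then prove two things: a lower-bound construction, and a stability analysis of extremal graphs.

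\textbf{Choice of family.} Take $\mathcal L_{180}$ to contain two kinds of graphs.

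The first kind are $4$-chromatic graphs obtained by joining small gadgets, so that $p(\mathcal L_{180})=3$. An example is a graph of the form $K_{a,a}\otimes C_4\otimes\cdots$, which forbids a $C_4$ inside one class when the other two classes are large. This forces each class of a near-Tur\'an graph to be $C_4$-free. Inside a class of size $n/3$ one can then place a polarity-type $C_4$-free graph with $\Theta(n^{3/2})$ edges, which gives (1.2). Concretely, take $T_{n,3}$ and insert an extremal $C_4$-free graph (or a suitable bipartite $C_4$-free graph) into one class. We check against each member of the family that this graph stays free. The surplus is $c n^{3/2}$ for every large $n$, using densities of primes to get $C_4$-free graphs on every order.

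The second kind are forbidden graphs designed to penalise any genuine three-factor join. For example, include graphs that force any two classes carrying internal edges to interact. Adding internal structure simultaneously in two classes is then forbidden. Meanwhile, a non-product modification — deleting a sparse set of cross edges between two classes in exchange for extra internal edges — is permitted and profitable. This is the mechanism of \cite{Xu26}, modified so that its product equality branch is strictly beaten.

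\textbf{Stability step.} Let $G\in\EX(n,\mathcal L_{180})$ with $n$ large. By Erd\H{o}s--Stone--Simonovits and the lower bound, $G$ is $o(n^2)$-close to $T_{n,3}$. By the standard Simonovits progressive-induction and cleaning argument, we obtain a partition $V_1,V_2,V_3$ with the following properties: all but $O(1)$ vertices have few internal neighbours; each $G[V_i]$ has maximum degree bounded by a function of the gadgets; and missing cross edges are $O(n^{3/2})$.

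\textbf{Exact comparison.} Suppose $\kappa(\overline G)\ge3$. Then $G=G_1\otimes G_2\otimes G_3$, and the factors must essentially coincide with the $V_i$, up to absorbing exceptional vertices. Each $G_i$ is forced to be free of the relevant local family, because it is joined completely to two large sets. Hence
\[
e(G)\le t_3(n)+\sum_i \ex(n_i,\mathcal M_i)+O(n),
\]
where $\mathcal M_i$ is the local (decomposition) family, and this includes the restriction that at most one class carries $C_4$-free-type density. We then exhibit a non-product graph that beats this bound by a positive amount, for instance $\Omega(n)$ or at least $1$. The idea is to remove a matching of cross edges between $V_2$ and $V_3$, which breaks the forbidden join configurations, and thereby allow additional internal edges in $V_2$ exceeding the number removed. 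Then $e(G)<\ex(n,\mathcal L_{180})$, a contradiction, which gives (1.3).

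\textbf{Main obstacle.} The main obstacle is the exact comparison. We must show that every product graph loses to the non-product construction at every large $n$, including the divisibility and parity cases where the product branch in \cite{Xu26} tied. This requires designing the gadgets (which is presumably where the 180 comes from) so that the gain from deleting cross edges is strictly positive. We must also verify, by a finite but tedious case check over the members of the family, that the non-product construction remains $\mathcal L_{180}$-free. I would first attempt the sparse-deletion exchange with explicit counting, then handle the exceptional vertices by a degree-based symmetrization argument.
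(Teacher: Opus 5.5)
Your proposal has a genuine gap. It does not yet contain a proof: the family is never specified, and the step you flag as the main obstacle is left open. That step is showing that every three-factor join is strictly beaten at every large $n$, and your sketch for it cannot work as set up. Your product bound carries $O(n)$ slack, so a non-product graph that is better ``by at least $1$'' yields no contradiction. More fundamentally, the dominant surplus is $\ex(\Theta(n),C_4)$, and $\ex(m,C_4)$ is known only as $\frac12m^{3/2}+o(m^{3/2})$. So no explicit edge count can separate a product branch from a non-product branch unless the $C_4$-part is literally shared by both.

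Two further steps are wrong. Your stability claim that every $G[V_i]$ has bounded maximum degree would give only $O(n)$ internal edges, which contradicts (1.2). Your assertion that the join factors essentially coincide with the $V_i$ ignores joins with a bounded factor, e.g.\ $K_1\otimes X$ with $\kappa(\overline X)=2$.

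The paper avoids counting altogether. It takes $\mathcal L_{180}$ to be the minimal nonmembers on at most $180$ vertices of the hereditary class $\mathcal C$, whose members have a red part that is $\mathcal A_S$-free and a blue part that is $C_4$-free. Then $T_{n,3}$ with a $C_4$-extremal graph in one part lies in $\mathcal C$ and gives (1.2).

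For three large factors, Lemma~\ref{lem:180} shows that on a join of three factors of order at least four, nonmembership in $\mathcal C$ is certified on at most $180$ vertices. Hence such an $\mathcal L_{180}$-extremal graph lies in $\mathcal C\subseteq\Free(\mathcal L_{180})$, and is therefore $\mathcal L(\mathcal C)$-extremal. Every $\mathcal L(\mathcal C)$-extremal graph has the form $A_u\otimes Q_w$ (Lemma~\ref{lem:convolution}). Simonovits's theorem that $\mathcal A_S$-extremal graphs have connected complements, together with an easy argument that $\overline{Q_w}$ is connected, then gives $\kappa(\overline G)=2$ (Proposition~\ref{prop:Cext}). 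The strict non-product gain thus lives entirely in the exactly solved $\mathcal A_S$ factor, while the $C_4$ factor is common and never needs to be known exactly.

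A bounded factor is excluded separately. Since $K_1\otimes K_{1,3,3,3}\notin\mathcal C$, the rest of the graph must be $K_{1,3,3,3}$-free. Erd\H{o}s--Simonovits then gives only $t_3(n)+O(n)$ edges, which contradicts (1.2).
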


The family is defined through a hereditary class specified by a vertex
partition.  We use
$K_{s_1,\ldots,s_r}$ for the complete $r$-partite graph with the
indicated part sizes and $I_s$ for the edgeless graph on $s$ vertices.  Put
\[
 \mathcal A_S=
 \{K_{1,3,3},\ K_3\otimes I_3,\ (2K_2)\otimes I_3,\ K_4\}
 \tag{1.4}
\]
and define $\mathcal C$ to be the class of graphs $G$ for which there is a
partition $V(G)=R\sqcup B$ such that $G[R]$ is $\mathcal A_S$-free and
$G[B]$ is $C_4$-free; edges between $R$ and $B$ are unrestricted.  Thus
$\mathcal C$ is closed under taking ordinary subgraphs.  Let
$\mathcal L(\mathcal C)$ be the family of graphs outside $\mathcal C$ that
are minimal under the ordinary subgraph relation, and define
\[
 \mathcal L_q=\{L\in\mathcal L(\mathcal C):v(L)\leq q\}.
 \tag{1.5}
\]
Then $\mathcal C=\Free(\mathcal L(\mathcal C))$; the family
$\mathcal L(\mathcal C)$ need not be finite.  We take $\mathcal L_{180}$
from (1.5).

Two ingredients make the finite truncation useful.  First, on a complete
join with three factors of order at least four, nonmembership in $\mathcal C$
always has a certificate on at most $180$ vertices.  Hence an
$\mathcal L_{180}$-free graph with such a presentation already belongs to
$\mathcal C$.  Second, joining any nonempty graph to a certain ten-vertex
complete multipartite graph produces a graph outside $\mathcal C$.  These
facts divide the proof into the case of three large join factors, which is
forced into $\mathcal C$, and the case of a bounded join factor, which has only
linear surplus.  Both contradict extremality.

Section~\ref{sec:aux} establishes the extremal formula, the bounded
obstruction criterion, and the ten-vertex obstruction.  The proof of the
main theorem is completed in Section~\ref{sec:mainproof}.

\paragraph{Statement of AI use.}
The proof strategy and initial manuscript draft were generated
by GPT-5.6 Sol in response to a problem formulated by the author.  The author
verified and revised the manuscript and assumes responsibility for all
content.

\section{Auxiliary results}\label{sec:aux}

\subsection{Classical extremal inputs}

We use three classical extremal inputs.  The first is the following exact
result of Simonovits \cite[Proposition~2]{Simonovits83} for the family
$\mathcal A_S$.

\begin{thm}[Simonovits]\label{thm:Simonovits-input}
For all sufficiently large $m$,
\[
 \ex(m,\mathcal A_S)=t_2(m)+3.
 \tag{2.1}
\]
Every extremal graph is obtained from a balanced complete bipartite graph by
placing one copy of $P_3$ in each side and deleting the cross edge between
the two path centres.  In particular, the complement of every sufficiently
large $\mathcal A_S$-extremal graph is connected.
\end{thm}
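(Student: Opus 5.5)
The first two assertions, formula (2.1) and the description of the extremal graphs, are exactly \cite[Proposition~2]{Simonovits83}, so I will quote them and not reprove them. Only the final sentence about complements needs an argument, and it follows directly from the stated description of the extremal graphs.

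For orientation I will first check the lower bound in (2.1), since it also explains the shape of the extremal graphs. Take a balanced complete bipartite graph with sides $A,B$. Add a path $a_1ua_2$ in $A$ and a path $b_1vb_2$ in $B$, and delete the cross edge $uv$. The edge count is $t_2(m)+2+2-1=t_2(m)+3$.
\begin{enumerate}[(i)]
\item The graph is $K_4$-free. A $P_3$ contains no triangle, so a $K_4$ would need exactly two vertices on each side. Those pairs would be path edges, both containing a path centre. That would force the edge $uv$, which has been deleted.
\item The graph contains none of $K_3\otimes I_3$, $(2K_2)\otimes I_3$, $K_{1,3,3}$. The same kind of local check works: each side carries only two edges, and these share a centre.
\end{enumerate}
The matching upper bound, and the fact that no other configuration attains it, is the substantive stability argument in \cite{Simonovits83}. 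This is the real obstacle, and I defer it to that reference.

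For the final sentence, let $G$ be one of the extremal graphs just described, with $|A|,|B|\ge4$, which holds for large $m$ since the sides are balanced. I consider the three parts of $\overline G$ in turn.
\begin{enumerate}[(i)]
\item On $A$, the graph $\overline G[A]$ is $K_{|A|}$ with the two edges $a_1u$ and $ua_2$ removed. It is therefore connected: $u$ is still adjacent to every vertex of $A\setminus\{a_1,a_2,u\}$, which is nonempty, and all other vertices of $A$ are pairwise adjacent.
\item By the same argument, $\overline G[B]$ is connected.
\item Every cross pair except $uv$ is an edge of $G$, so the only cross edge of $\overline G$ is $uv$. This edge joins the two connected pieces.
\end{enumerate}
Hence $\kappa(\overline G)=1$ for every sufficiently large $\mathcal A_S$-extremal graph, which is the claim.
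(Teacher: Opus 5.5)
Your proposal is correct and follows the same route as the paper, which also takes formula (2.1) and the description of the extremal graphs directly from \cite[Proposition~2]{Simonovits83} and treats the connectivity of $\overline G$ as an immediate consequence of that description. Your explicit check of that consequence is exactly the detail the paper leaves implicit: each side of $\overline G$ is a complete graph minus a $P_3$, hence connected once $|A|,|B|\geq4$, and the single cross non-edge $uv$ of $G$ joins the two sides.
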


The second input is the case of the Erd\H{o}s--Simonovits theorem on
complete multipartite forbidden graphs in which the smallest part has order
one \cite{ES72}.  Set
\[
 T=K_{1,3,3,3}.
 \tag{2.2}
\]
Then
\begin{equation}
 \ex(m,T)=t_3(m)+O(m).
 \tag{2.3}
\end{equation}

Finally, we use the classical estimate
\[
 \ex(m,C_4)=\frac12m^{3/2}+o(m^{3/2})
 \tag{2.4}
\]
from \cite{Brown66,ERS66,KST54}; see also
\cite[Theorem~3.4]{FS13}.

\subsection{The hereditary class and its finite obstruction family}

Since $\mathcal C$ is closed under ordinary subgraphs, its minimal
nonmembers $\mathcal L(\mathcal C)$ are well defined, each $\mathcal L_q$
is finite, and
\[
 \mathcal C\subseteq\Free(\mathcal L_q).
 \tag{2.5}
\]

\begin{pro}\label{prop:parameter-lower}
For every $q\geq16$, the family $\mathcal L_q$ is nonempty and finite,
\[
 p(\mathcal L_q)=3,
 \tag{2.6}
\]
and there is a constant $c>0$ such that
\[
 \ex(n,\mathcal L_q)>t_3(n)+cn^{3/2}
 \tag{2.7}
\]
for every sufficiently large $n$.
\end{pro}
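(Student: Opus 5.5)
The plan has three parts: exhibit a small member of $\mathcal L(\mathcal C)$ to get nonemptiness and an upper bound on $p$; show every graph outside $\mathcal C$ has chromatic number at least $4$ to get the lower bound on $p$; and build an explicit $\mathcal C$-graph for (2.7). Finiteness is immediate, since there are finitely many graphs on at most $q$ vertices.

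\emph{Nonemptiness and $p(\mathcal L_q)\le 3$.} Take $T=K_{1,3,3,3}$, with $10$ vertices. We check that $T\notin\mathcal C$. Let $V(T)=R\sqcup B$. The graph $T[B]$ must be $C_4$-free. Any two vertices from distinct parts of size three, together with two vertices from a third part, span a $C_4$. So $B$ meets at most one $3$-part in two or more vertices, and meets each other $3$-part in at most one vertex. In fact the restriction is stronger: if $B$ contains two vertices of one $3$-part and a vertex $y$ from another part (including the $1$-part), then these plus a further vertex adjacent to both give a $C_4$. A short case analysis should then show that $R$ contains at least two vertices from each of the three $3$-parts, or something close to that. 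We then aim to find in $T[R]$ a copy of $K_{1,3,3}$, $K_3\otimes I_3$ or $(2K_2)\otimes I_3$; for instance $K_{2,2,2}$ plus the apex essentially gives these. If $K_{1,3,3,3}$ turns out not to be forced outside $\mathcal C$, we replace it with a larger complete $3$-partite-plus-apex or complete $4$-partite graph on at most $16$ vertices. The hypothesis $q\ge16$ suggests the intended witness is something like $K_{4,4,4,4}$ or $K_{1,5,5,5}$, and we would check it the same way via pigeonhole. Any witness outside $\mathcal C$ contains a minimal nonmember with at most that many vertices, which lies in $\mathcal L_q$. A $4$-chromatic witness such as $K_4$ joined suitably, or a $4$-partite graph, then yields a member of chromatic number at most $4$, hence $p\le3$.

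\emph{$p(\mathcal L_q)\ge3$.} Every $3$-colourable graph lies in $\mathcal C$: put all of it into $R$ when it is $\mathcal A_S$-free. This needs care, because $K_{1,3,3}$ is $3$-colourable. Instead, split a $3$-colouring as $R=$ classes $1\cup2$ (bipartite) and $B=$ class $3$ (independent). Then $G[R]$ is bipartite, hence free of $K_{1,3,3}$, $K_3\otimes I_3$, $(2K_2)\otimes I_3$ and $K_4$, all of which contain triangles. And $G[B]$ is edgeless, hence $C_4$-free. So every member of $\mathcal L_q$ has $\chi\ge4$, and together with the previous part $p=3$.

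\emph{Lower bound (2.7).} Since $\mathcal C\subseteq\Free(\mathcal L_q)$ by (2.5), it suffices to build an $n$-vertex graph in $\mathcal C$ with many edges. Take $R$ of size $a$ carrying a complete bipartite graph, $B$ of size $b$ carrying a $C_4$-free graph with $(\tfrac12-o(1))b^{3/2}$ edges (from (2.4)), and all edges between $R$ and $B$, with $a\approx 2n/3$ and $b\approx n/3$ chosen so that the complete $3$-partite part has $t_3(n)-O(1)$ edges. The total is then $t_3(n)+\Omega(n^{3/2})$.

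The main obstacle is the first step: pinning down a concrete small graph certifiably outside $\mathcal C$. This requires a case analysis over all partitions $R\sqcup B$, using $C_4$-freeness of $B$ to force enough of the graph into $R$ to contain a member of $\mathcal A_S$.
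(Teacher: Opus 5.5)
The gap is in the step you flag yourself. You never certify a $4$-colourable graph on at most $16$ vertices outside $\mathcal C$, and your leading candidate is actually in $\mathcal C$. For $K_{1,3,3,3}$ with parts $P_0,P_1,P_2,P_3$, take $B=P_0\cup P_1$ and $R=P_2\cup P_3$:
\begin{itemize}
\item $G[B]=K_{1,3}$ is a tree, hence $C_4$-free;
\item $G[R]=K_{3,3}$ is bipartite, hence $\mathcal A_S$-free, since every member of $\mathcal A_S$ contains a triangle.
\end{itemize}
The same partition puts every $K_{1,a,b,c}$ into $\mathcal C$. So your ``$3$-partite-plus-apex'' fallbacks, including $K_{1,5,5,5}$, fail as well. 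In particular, your case analysis claiming that $R$ keeps two vertices from each $3$-part cannot go through: the blue side may take the apex together with an entire $3$-part. This is exactly why the paper later needs an extra apex, $K_1\otimes T$, in Lemma~\ref{lem:universal-obstruction}.

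The witness that works, and the one the hypothesis $q\ge16$ is calibrated for, is $K_{4,4,4,4}$, and the check is short. If the blue set contained two vertices in each of two partite sets, these would span a blue $K_{2,2}=C_4$. Otherwise at most one partite set loses two or more vertices to blue. Then three partite sets keep at least three red vertices each, and the red graph contains $K_{3,3,3}\supseteq K_{1,3,3}$. A subgraph-minimal nonmember $L_0\subseteq K_{4,4,4,4}$ has $v(L_0)\le16\le q$ and $\chi(L_0)\le4$. Together with your lower bound this gives $\chi(L_0)=4$, hence nonemptiness and (2.6). Without this or another verified witness, your proposal does not establish nonemptiness or $p(\mathcal L_q)\le3$.

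Your other two steps are correct and coincide with the paper's. For $p(\mathcal L_q)\ge3$ you put two colour classes red and one blue. For (2.7) you join a complete bipartite red side to an extremal $C_4$-free blue side of order about $n/3$.
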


\begin{proof}
Every $3$-colourable graph belongs to $\mathcal C$: put two colour classes
on the red side, which is bipartite and hence $\mathcal A_S$-free, and the
third independent class on the blue side.  Hence every member of
$\mathcal L(\mathcal C)$ has chromatic number at least four.

On the other hand, $K_{4,4,4,4}\notin\mathcal C$.  Indeed, if a blue set
contains two vertices in each of two partite sets, it contains a $K_{2,2}$.
Otherwise at least three partite sets contain three red vertices each, and
the red graph contains $K_{1,3,3}$.  Choose a subgraph $L_0$ of
$K_{4,4,4,4}$ that is minimal outside $\mathcal C$.  Then
$4\leq\chi(L_0)\leq4$, so $L_0\in\mathcal L_q$ and (2.6) follows.

For (2.7), start with a balanced complete three-partite graph, choose one
part $W$, and place a graph $Q\in\EX(|W|,C_4)$ inside $W$, leaving the other
two parts internally empty.  Put the latter two parts on the red side and
$V(Q)$ on the blue side.
The resulting graph lies in $\mathcal C$, and hence is $\mathcal L_q$-free,
and has $t_3(n)+e(Q)$ edges.  Estimate (2.4), applied to a part of order
$n/3+O(1)$, proves (2.7).
\end{proof}

\begin{lem}\label{lem:convolution}
For every $n$, with the maximum taken over nonnegative integers $u,w$,
\[
 \ex(n,\mathcal L(\mathcal C))=
 \max_{\substack{u,w\geq0\\u+w=n}}
 \{\ex(u,\mathcal A_S)+uw+\ex(w,C_4)\}.
 \tag{2.8}
\]
Moreover, if $G\in\EX(n,\mathcal L(\mathcal C))$ and $R\sqcup B$ witnesses
$G\in\mathcal C$, then all $R$--$B$ edges are present and
\[
 G=A_u\otimes Q_w,
 \quad A_u\in\EX(u,\mathcal A_S),
 \quad Q_w\in\EX(w,C_4),
 \tag{2.9}
\]
where $(u,w)$ attains the maximum in (2.8).
\end{lem}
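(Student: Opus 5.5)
Since $\mathcal C=\Free(\mathcal L(\mathcal C))$, the number $\ex(n,\mathcal L(\mathcal C))$ is simply the maximum of $e(G)$ over $n$-vertex graphs $G\in\mathcal C$. So I would prove (2.8) by matching an upper and a lower bound, and then read off (2.9) from the equality case.

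For the upper bound, take any $G\in\mathcal C$ on $n$ vertices with a witnessing partition $R\sqcup B$, and put $u=|R|$, $w=|B|$. Split the edges into three classes: those inside $R$, those between $R$ and $B$, and those inside $B$. Since $G[R]$ is $\mathcal A_S$-free, there are at most $\ex(u,\mathcal A_S)$ edges inside $R$. At most $uw$ edges join $R$ and $B$. Since $G[B]$ is $C_4$-free, there are at most $\ex(w,C_4)$ edges inside $B$. Summing gives
\[
 e(G)\leq \ex(u,\mathcal A_S)+uw+\ex(w,C_4),
\]
which is at most the maximum in (2.8). The degenerate values $u=0$ or $w=0$ cause no trouble, because the extremal numbers at order zero are $0$ by convention.

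For the lower bound, fix $(u,w)$ attaining the maximum. Choose $A_u\in\EX(u,\mathcal A_S)$ and $Q_w\in\EX(w,C_4)$, and form $A_u\otimes Q_w$. The partition $R=V(A_u)$, $B=V(Q_w)$ shows that this graph lies in $\mathcal C$, and it has exactly the maximal number of edges. This proves (2.8).

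For the structural statement, let $G\in\EX(n,\mathcal L(\mathcal C))$. Then $G\in\mathcal C$, so a witnessing partition $R\sqcup B$ exists; let it be any such partition. By extremality, $e(G)$ equals the maximum in (2.8). The chain of inequalities above then forces equality in each of the three estimates and in the final step. Concretely:
\begin{itemize}
\item the pair $(|R|,|B|)$ attains the maximum;
\item all $uw$ edges between $R$ and $B$ are present;
\item $G[R]$ has exactly $\ex(u,\mathcal A_S)$ edges, so $G[R]\in\EX(u,\mathcal A_S)$;
\item $G[B]$ has exactly $\ex(w,C_4)$ edges, so $G[B]\in\EX(w,C_4)$.
\end{itemize}
This is precisely (2.9).

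I expect no real obstacle. The only point needing care is that equality in a sum of upper bounds forces equality in each summand, and this holds because each summand is bounded separately by its own maximum. The empty-factor conventions also need a brief check when $u=0$ or $w=0$.
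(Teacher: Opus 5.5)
Your proof is correct and follows essentially the paper's argument. The paper only says tersely that completing all $R$--$B$ edges preserves membership in $\mathcal C$ and that each side can be replaced independently by a same-order extremizer. Your three-term upper bound $\ex(u,\mathcal A_S)+uw+\ex(w,C_4)$, the join construction, and the termwise equality analysis spell out exactly that argument.
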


\begin{proof}
For a fixed split of orders $u,w$, completing all cross edges preserves
membership in $\mathcal C$.  Each side may then be replaced independently
by a same-order factor extremizer.  This proves both assertions.
\end{proof}

\subsection{A bounded obstruction criterion for complete joins with three factors}

The next lemma is the reason that a finite truncation of the generally
infinite obstruction set is sufficient for our argument.  Recall that
\[
 h=\max_{A\in\mathcal A_S}v(A)=7.
 \tag{2.10}
\]

\begin{lem}\label{lem:180}
Let $V(G)=V_1\sqcup V_2\sqcup V_3$, where $|V_i|\geq4$, and suppose every
edge between distinct factors is present.  Put
$R_k=\bigcup_{i\neq k}V_i$.  Then $G\in\mathcal C$ if and only if, for some
$k\in\{1,2,3\}$, one of the following conditions holds:
\begin{enumerate}[(a)]
\item $G[V_k]$ is $C_4$-free and $G[R_k]$ is
      $\mathcal A_S$-free;
\item $G[V_k]$ is $P_3$-free and some $x\in R_k$ belongs to every copy of
      a member of $\mathcal A_S$ in $G[R_k]$.
\end{enumerate}
If $G\notin\mathcal C$, then $G$ has an induced nonmember of $\mathcal C$
on at most
\[
 3h(h+1)+12=180
 \tag{2.11}
\]
vertices.
\end{lem}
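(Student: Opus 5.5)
The plan is to prove the characterization first, by classifying every possible witness partition $R\sqcup B$ of a three-factor complete join. The size bound will then follow by collecting a small certificate for the failure of each of conditions (a) and (b), for each $k$, and applying the characterization to the induced subgraph on the certificate.

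\emph{Sufficiency.} If (a) holds for some $k$, take $B=V_k$ and $R=R_k$. If (b) holds, take $B=V_k\cup\{x\}$ and $R=R_k\setminus\{x\}$. Then $G[R]$ is $\mathcal A_S$-free because $x$ meets every copy of a member of $\mathcal A_S$ in $G[R_k]$. Also $G[B]$ is $C_4$-free. Indeed, $x$ is adjacent to all of $V_k$ and $G[V_k]$ is $P_3$-free, so $G[V_k]$ is a matching plus isolated vertices. Any $C_4$ in $G[B]$ would either lie in $G[V_k]$, which is impossible, or pass through $x$, which would force a $P_3$ in $G[V_k]$, also impossible.

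\emph{Necessity.} Suppose $R\sqcup B$ witnesses $G\in\mathcal C$. I argue in four steps.
\begin{enumerate}[(i)]
\item If $|B\cap V_i|\le1$ for every $i$, then each $V_i$ retains at least $3$ red vertices, so the red graph contains $K_{3,3,3}\supseteq K_{1,3,3}$, a contradiction.
\item Hence some $V_k$ satisfies $|B\cap V_k|\ge2$. No other factor can contribute two blue vertices, since two blue vertices in each of two factors span a $K_{2,2}=C_4$.
\item Consequently the other two factors each keep at least $3$ red vertices. If some vertex of $V_k$ were red, these would give a red $K_{1,3,3}$. So $V_k\subseteq B$.
\item Two blue vertices $y,z$ outside $V_k$, necessarily in different factors and hence adjacent, together with two vertices of $V_k$ would form a $C_4$. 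So $B=V_k$ or $B=V_k\cup\{x\}$ with $x\in R_k$.
\end{enumerate}
The first case is exactly (a). In the second case, $G[V_k]$ must be $P_3$-free, because $x$ together with a $P_3$ in $V_k$ gives a $C_4$. Moreover $G[R_k\setminus\{x\}]$ must be $\mathcal A_S$-free, that is, $x$ meets every copy in $G[R_k]$. This is exactly (b).

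\emph{Bounded certificate.} Suppose $G\notin\mathcal C$, so both (a) and (b) fail for each $k$. For each $k$ I choose a witness set $S_k$ as follows.
\begin{itemize}
\item If $G[V_k]$ contains a $P_3$, then (a) fails either through a $C_4$ in $V_k$ or through a copy of a member of $\mathcal A_S$ in $R_k$. I record the $C_4$ if one exists, and otherwise record the $P_3$ together with such a copy. This uses at most $h+4$ vertices.
\item If $G[V_k]$ is $P_3$-free, then it is also $C_4$-free, so failure of (a) gives a copy $A_1$ in $R_k$. Failure of (b) then says no single vertex hits all copies, so for each of the at most $h$ vertices $v$ of $A_1$ I record a copy avoiding $v$. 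Altogether this uses at most $h+h^2=h(h+1)$ vertices, and every vertex of $R_k$ misses one of the recorded copies: a vertex outside $A_1$ misses $A_1$, and a vertex $v$ of $A_1$ misses the copy recorded for it.
\end{itemize}
Let $S$ be the union of the three witness sets together with padding: for each $i$, add vertices of $V_i$, beyond those already recorded for the $C_4$ or $P_3$ in $V_i$, until $|S\cap V_i|\ge4$. Because the $C_4$ or $P_3$ witnesses inside $V_i$ can be absorbed into these four padding vertices, the total stays within $3h(h+1)+12=180$. The induced graph $H=G[S]$ is again a complete join of three factors, each of order at least $4$.

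Each recorded witness survives in $H$: copies, $P_3$'s and $C_4$'s are preserved, and every candidate $x\in S\cap R_k$ still misses some recorded copy. Hence (a) and (b) fail in $H$ for every $k$, and the characterization gives $H\notin\mathcal C$. The main obstacle is the bookkeeping in this last step: making sure that failure of (b) is certified by the at most $h(h+1)$ hitting-set witnesses, and that the padding absorbs the $V_k$-internal witnesses, so that the count is exactly $3h(h+1)+12$.
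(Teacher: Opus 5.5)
Your proof is correct and follows essentially the same route as the paper. You use the same red--blue case analysis, which forces one factor to be entirely blue with at most one further blue vertex, and the same certificate: one copy plus, for each of its vertices, a copy avoiding that vertex, giving at most $h(h+1)$ vertices per $k$, padded to four vertices per factor. The only difference is bookkeeping: you absorb the internal $C_4$/$P_3$ witnesses into the padding budget, whereas the paper bounds every witness by $h(h+1)$ and the padding by $12$, and both give $180$.
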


\begin{proof}
Interpret membership in $\mathcal C$ as a valid red--blue vertex partition.
Two factors cannot each contain two blue vertices, since their cross edges
give a blue $K_{2,2}=C_4$.  Hence two factors each contain at least three
red vertices.  If the third factor contains a red vertex, these vertices
give a red $K_{1,3,3}$.  Thus one factor, say $V_k$, is entirely blue.  There is at
most one further blue vertex in $R_k$, again by the $K_{2,2}$ obstruction.

If there is no blue vertex in $R_k$, condition (a) is necessary and sufficient.  If the
unique exception is $x\in R_k$, the red condition is equivalent to $x$
belonging to every $\mathcal A_S$-copy in $G[R_k]$.  Since $x$ is complete
to $V_k$, the graph $G[V_k\cup\{x\}]$ is $C_4$-free exactly when
$G[V_k]$ is $P_3$-free.  This proves the characterization in both directions.

Suppose now that $G\notin\mathcal C$.  For each $k\in\{1,2,3\}$ choose a witness to the
failure of the two conditions.  If $G[V_k]$ contains a $C_4$, choose four
vertices of such a copy.  If $G[V_k]$ is $C_4$-free but contains a $P_3$,
then $G[R_k]$ contains a member of $\mathcal A_S$; include that copy and the
path, using at most $h+3$ vertices.  Finally, if $G[V_k]$ is $P_3$-free,
the family of $\mathcal A_S$-copies in $G[R_k]$ has empty total
intersection.  Choose one $\mathcal A_S$-copy $Q_0$ and, for every $v\in V(Q_0)$, choose a
copy $Q_v$ avoiding $v$.  Their union has at most
$h(h+1)=56$ vertices and still has empty total intersection.

Take the union of the three chosen witnesses and, if necessary, add
arbitrary vertices so that each factor contains four chosen vertices.  The
induced graph obtained has at most $3\cdot56+12=180$ vertices.  It fails both
conditions for every $k$.  By the characterization already
proved, this induced graph is not in $\mathcal C$.
\end{proof}

\begin{cor}\label{cor:finite-obstruction}
Under the hypotheses of Lemma~\ref{lem:180},
\[
 G\text{ is }\mathcal L_{180}\text{-free}
 \quad\Longrightarrow\quad G\in\mathcal C.
 \tag{2.12}
\]
\end{cor}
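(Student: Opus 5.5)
The plan is to argue by contraposition, using the final assertion of Lemma~\ref{lem:180} together with the definition (1.5) of $\mathcal L_{180}$. Suppose $G$ satisfies the hypotheses of Lemma~\ref{lem:180} and $G\notin\mathcal C$. We want to exhibit a member of $\mathcal L_{180}$ inside $G$.

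First, by Lemma~\ref{lem:180} there is a vertex set $S\subseteq V(G)$ with $|S|\leq180$ such that the induced graph $H=G[S]$ is not in $\mathcal C$. The lemma is applied exactly as stated: the factors have order at least four and all cross edges are present. Next, since $\mathcal C$ is closed under ordinary subgraphs and $H$ is finite, we may pass to a subgraph $L\subseteq H$ that is minimal, under the ordinary subgraph relation, among subgraphs of $H$ not in $\mathcal C$. We take $L$ to be any such subgraph with the fewest vertices plus edges.

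We then check that $L\in\mathcal L(\mathcal C)$. Every proper subgraph of $L$ is a subgraph of $H$ and is strictly smaller, so by the choice of $L$ it lies in $\mathcal C$. Hence $L$ is a minimal nonmember. Moreover $v(L)\leq v(H)\leq180$, so $L\in\mathcal L_{180}$ by (1.5). Finally, $L\subseteq H\subseteq G$ gives an ordinary copy of a member of $\mathcal L_{180}$ in $G$, so $G$ is not $\mathcal L_{180}$-free. This proves (2.12).

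There is no real obstacle here. The only points needing care are two checks. The first is that minimality inside $H$ coincides with global minimality in the sense of (1.5); this holds because every subgraph of $L$ is a subgraph of $H$. The second is that Lemma~\ref{lem:180} produces an \emph{induced} nonmember, which in particular is an ordinary subgraph of $G$.
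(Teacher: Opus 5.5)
Your proposal is correct and follows the paper's proof. You apply the final assertion of Lemma~\ref{lem:180} to obtain a nonmember on at most $180$ vertices, and you extract from it a subgraph-minimal nonmember, which lies in $\mathcal L_{180}$ and is a subgraph of $G$; choosing $L$ to minimize $v(L)+e(L)$ is simply a more explicit version of the paper's ``repeatedly deleting vertices and edges while nonmembership persists.''
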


\begin{proof}
If $G\notin\mathcal C$, Lemma~\ref{lem:180} gives a nonmember $H$ on at
most $180$ vertices.  By repeatedly deleting vertices and edges while
nonmembership persists, $H$ contains a subgraph-minimal nonmember
$L\in\mathcal L_{180}$, contrary to the $\mathcal L_{180}$-freeness of $G$.
\end{proof}

\subsection{Extremizers and a universal obstruction}

\begin{pro}\label{prop:Cext}
For all sufficiently large $n$, every
$G\in\EX(n,\mathcal L(\mathcal C))$ satisfies
\[
 \kappa(\overline G)=2.
 \tag{2.13}
\]
\end{pro}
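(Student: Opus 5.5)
The plan is to reduce to the product description of Lemma~\ref{lem:convolution} and then to show that both factors are large and are themselves ``join-indecomposable''. Let $G\in\EX(n,\mathcal L(\mathcal C))$. Since $G\in\mathcal C$, fix a witnessing partition $R\sqcup B$. By Lemma~\ref{lem:convolution} we have $G=A_u\otimes Q_w$ with $A_u\in\EX(u,\mathcal A_S)$, $Q_w\in\EX(w,C_4)$, and $(u,w)$ maximizing (2.8). Then $\overline G$ is the disjoint union of $\overline{A_u}$ and $\overline{Q_w}$. Hence (2.13) follows once we show that $u,w>0$ and that both $\overline{A_u}$ and $\overline{Q_w}$ are connected.

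\emph{Step 1: both $u$ and $w$ are linear in $n$.} Write $q(u,w)=\tfrac14u^2+uw$ with $u+w=n$. A short computation gives
\[
 q(u,w)=\tfrac13n^2-\tfrac34\bigl(w-\tfrac n3\bigr)^2 .
\]
Since $\mathcal A_S\ni K_{1,3,3}$ has chromatic number $3$, Theorem~\ref{thm:Simonovits-input} together with the trivial bound for bounded $u$ gives $\ex(u,\mathcal A_S)\le t_2(u)+C$ for all $u$, for some absolute constant $C$. By (2.4), $\ex(w,C_4)\le w^{3/2}$ for all $w$ (adjusting constants if necessary). Therefore the value of (2.8) at $(u,w)$ is at most
\[
 \tfrac13n^2-\tfrac34\bigl(w-\tfrac n3\bigr)^2+O(n^{3/2}).
\]
On the other hand, the construction in the proof of Proposition~\ref{prop:parameter-lower} lies in $\mathcal C$, so the maximum is at least $t_3(n)\ge \tfrac13n^2-O(1)$. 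Comparing the two bounds gives $|w-n/3|=O(n^{3/4})$. Hence both $u$ and $w$ tend to infinity with $n$.

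\emph{Step 2: the complements of the factors are connected.} Because $u$ is large, Theorem~\ref{thm:Simonovits-input} states directly that $\overline{A_u}$ is connected. For $Q_w$, suppose $\overline{Q_w}$ were disconnected, so that $Q_w=X\otimes Y$ with $X,Y$ nonempty. If $|X|,|Y|\ge2$, two vertices of $X$ and two vertices of $Y$ span a $C_4$, which is impossible. So, say, $X$ is a single vertex $x$ dominating $Q_w$. Then $Q_w-x$ has maximum degree at most $1$, since two neighbours of a vertex $y$ in $Q_w-x$ would form a $C_4$ together with $x$ and $y$. Hence $e(Q_w)\le (w-1)+(w-1)/2$, which is linear in $w$. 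This contradicts $e(Q_w)=\ex(w,C_4)=\Theta(w^{3/2})$ for large $w$. Thus $\overline{Q_w}$ is connected, and $\kappa(\overline G)=2$.

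I expect the main obstacle to be Step 1, specifically the uniform bound $\ex(u,\mathcal A_S)\le t_2(u)+C$ across \emph{all} $u$, including the small $u$ for which Theorem~\ref{thm:Simonovits-input} does not apply. I would handle this by taking $C$ to be the maximum of $\ex(u,\mathcal A_S)-t_2(u)$ over the finitely many small $u$, together with the value $3$ from (2.1). The rest is the routine concavity computation above.
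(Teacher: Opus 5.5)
Your proof is correct and follows the paper's argument. You reduce to $G=A_u\otimes Q_w$ via Lemma~\ref{lem:convolution}, then use the identity (2.15) with Theorem~\ref{thm:Simonovits-input} and (2.4) to force both orders to be linear. You get connectivity of $\overline{A_u}$ from Theorem~\ref{thm:Simonovits-input}, and connectivity of $\overline{Q_w}$ from the ``universal vertex plus maximum degree at most one gives linearly many edges'' argument. The only difference is that your uniform constant in $\ex(u,\mathcal A_S)\le t_2(u)+C$ replaces the paper's two-stage subsequence argument (first $u=\Omega(n)$, then $w=\Omega(n)$), and it also yields the sharper estimate $|w-n/3|=O(n^{3/4})$.
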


\begin{proof}
By Lemma~\ref{lem:convolution}, write $G=A_u\otimes Q_w$ as in (2.9).
The split $u=2n/3+O(1)$ gives
$\ex(n,\mathcal L(\mathcal C))\geq n^2/3-o(n^2)$.
Suppose that, along a sequence of maximizing pairs with $n\to\infty$, one
has $u/n\to0$.  Then the trivial bound
$\ex(u,\mathcal A_S)\leq\binom u2$, together with (2.4), makes the
right-hand side of (2.8) $o(n^2)$, a contradiction.  Hence $u=\Omega(n)$,
so Theorem~\ref{thm:Simonovits-input} applies for all sufficiently large
$n$.  If some subsequence satisfied $w/n\to0$, then that theorem would make
the right-hand side of (2.8) $n^2/4+o(n^2)$, again contradicting the same
comparison split.  Consequently $w=\Omega(n)$.  Now
Theorem~\ref{thm:Simonovits-input} and (2.4) show that the maximizing orders satisfy
\[
 u=\frac{2n}{3}+o(n),\qquad w=\frac n3+o(n).
 \tag{2.14}
\]
Indeed,
\[
 \frac{u^2}{4}+u(n-u)
 =\frac{n^2}{3}-\frac34\left(u-\frac{2n}{3}\right)^2,
 \tag{2.15}
\]
and comparison with $u=2n/3+O(1)$ proves (2.14).  Thus both factors have
sufficiently large order.  By Theorem~\ref{thm:Simonovits-input},
$\overline{A_u}$ is connected.

We claim that $\overline{Q_w}$ is connected as well.  Otherwise $Q_w$ is a
join of at least two nonempty graphs.  Since $Q_w$ is $C_4$-free, at most one
join factor has more than one vertex.  If all factors are singletons, then
$C_4$-freeness forces $w\leq3$; otherwise there can be only one additional
singleton factor.  Hence, at large order, $Q_w=K_1\otimes R$.  If $R$ contains a
$P_3$, its three vertices together with the universal vertex contain an
ordinary $C_4$.  Thus $R$ is $P_3$-free and has maximum degree at most one,
so $e(Q_w)=O(w)$.  This contradicts the lower bound
$\ex(w,C_4)=\Omega(w^{3/2})$.  Therefore both factor complements are connected,
and
\[
 \overline G=\overline{A_u}\sqcup\overline{Q_w}
\]
has exactly two components.
\end{proof}

The case with a bounded join factor will be excluded by the following finite
obstruction.  Let $T=K_{1,3,3,3}$ as in (2.2), and denote its partite sets by
$P_0,P_1,P_2,P_3$, of orders $1,3,3,3$ respectively.

\begin{lem}\label{lem:universal-obstruction}
For every nonempty graph $A$,
\[
 A\otimes T\notin\mathcal C.
 \tag{2.16}
\]
\end{lem}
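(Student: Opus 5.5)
Since $\mathcal C$ is closed under ordinary subgraphs and every nonempty $A$ contains a vertex, $A\otimes T$ contains $K_1\otimes T=K_{1,1,3,3,3}$. It therefore suffices to show that $K_{1,1,3,3,3}\notin\mathcal C$. Write its parts as $\{a\}$, $P_0$, $P_1$, $P_2$, $P_3$, with $|P_0|=1$ and $|P_i|=3$ for $i\geq1$. Suppose, for contradiction, that a red--blue partition $R\sqcup B$ witnesses membership. As in the proof of Lemma~\ref{lem:180}, I will use only a few forbidden configurations. In blue these are $C_4=K_{2,2}$, the diamond $K_{1,1,2}\supseteq C_4$ and $K_4\supseteq C_4$. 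In red these are $K_4$ and $K_{1,3,3}$, both in $\mathcal A_S$.

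\emph{Step 1 (blue constraints).} Two vertices in one part and two vertices in another part span a $C_4$. Two vertices in one part together with two vertices in two further distinct parts span a $K_{1,1,2}$. Four vertices in four distinct parts span a $K_4$. Hence at most one of $P_1,P_2,P_3$ contains two or more blue vertices. Moreover, the set of parts containing a blue vertex has size at most three. Symmetrically, red $K_4$-freeness means that the set of parts containing a red vertex has size at most three.

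\emph{Step 2 (case: no $P_i$ has two blue vertices).} Then each of $P_1,P_2,P_3$ has at least two red vertices, so these three parts already use up the red budget of Step~1. Consequently $a$ and the vertex of $P_0$ are blue. A blue vertex $x\in P_1\cup P_2\cup P_3$ together with $a$ and $P_0$ forms only a triangle. However, two such vertices $x,y$ together with $a$ and $P_0$ give either a $K_4$ (if $x,y$ lie in different parts) or the $4$-cycle $a\,x\,P_0\,y$ (if they lie in the same part). So at most one vertex of $P_1\cup P_2\cup P_3$ is blue. The red graph then contains $K_{2,3,3}\supseteq K_{1,3,3}$, a contradiction.

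\emph{Step 3 (case: say $P_1$ has at least two blue vertices).} By the constraints of Step~1, at most one blue vertex lies outside $P_1$. If that vertex lies in $P_2$ or $P_3$, then $a$, $P_0$, and a red vertex of each of $P_2,P_3$ form a red $K_4$. Otherwise, the fully red parts $P_2$ and $P_3$, together with whichever of $a,P_0$ is red, contain a red $K_{1,3,3}$. Either way we contradict $\mathcal A_S$-freeness of the red side.

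The main obstacle is only the bookkeeping in Step~1, which bounds how blue vertices may be distributed across parts. Once that is in place, each case closes with a single forbidden red configuration.
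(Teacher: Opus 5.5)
Your proof is correct. After the same reduction to $K_1\otimes T$ as the paper, it uses a different case analysis.

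The paper splits on the colour of the added vertex $z$:
\begin{enumerate}[(i)]
\item if $z$ is red, it lists the possible $C_4$-free blue subsets of $T$, finds a red $K_{3,3}$ inside $T$ in each case, and adds $z$ to get a red $K_{1,3,3}$;
\item if $z$ is blue, it notes that the blue part of $T$ must be $P_3$-free, so it lies in one partite set or is a single cross edge, and then exhibits a red $K_{1,3,3}$ directly.
\end{enumerate}

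You instead treat the two singleton parts $\{a\}$ and $P_0$ of $K_{1,1,3,3,3}$ symmetrically and split on whether some three-vertex part carries two blue vertices. You also bring in the red $K_4$ obstruction, which the paper never uses in this lemma. That extra member of $\mathcal A_S$ is what keeps your argument short. In Step~2 it immediately forces $a$ and $P_0$ blue. Without it, you would have to handle a red $K_{1,2,2,2}$, which is $K_{1,3,3}$-free. In Step~3 it disposes of the case of a blue vertex in $P_2\cup P_3$ at once.

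The paper's route is slightly longer but proves a bit more. It uses only blue $C_4$-freeness and red $K_{1,3,3}$-freeness. So it shows that $K_1\otimes T$ admits no partition with a $C_4$-free blue side and a $K_{1,3,3}$-free red side. The lemma would therefore survive replacing $\mathcal A_S$ by any family containing $K_{1,3,3}$. Your argument needs both $K_4$ and $K_{1,3,3}$ to lie in $\mathcal A_S$, which they do here.
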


\begin{proof}
Since $\mathcal C$ is closed under taking subgraphs, it is enough to show
$K_1\otimes T\notin\mathcal C$.  Denote the vertex of the $K_1$ factor by
$z$ and suppose, for a contradiction, that a valid red--blue partition
exists.

First suppose $z$ is red.  A $C_4$-free blue set in $T$ has one of the
following forms: it is empty; it meets only one partite set; it meets two
partite sets and at least one of them in exactly one vertex; or it meets
three partite sets in exactly one vertex each.  If it meets at most one
partite set, or if it meets $P_0$ and only one large partite set, then two
untouched large partite sets already form a red $K_{3,3}$.  In all remaining
cases, write the numbers of red vertices in $(P_0,P_1,P_2,P_3)$.  Up to a
permutation of the three large partite sets, this vector componentwise
dominates one of
\[
 (1,2,0,3),\qquad (0,2,2,3),\qquad (1,2,2,2),
\]
for which one may group the red vertices as $1+2$ versus $3$, $2+2$
versus $3$, and $1+2$ versus $2+2$, respectively.  Thus in every case the
red vertices of $T$ contain a $K_{3,3}$.  Adding the red vertex $z$
therefore gives a red $K_{1,3,3}$, a contradiction.

Now suppose $z$ is blue.  The blue vertices of $T$ must induce a
$P_3$-free graph, since $z$ together with a blue $P_3$ forms a $C_4$.
Consequently they either lie in one partite set or consist of one vertex
from each of two partite sets.  In the first case the remaining red vertices of $T$
contain $K_{1,3,3}$.  In the second, if one occupied partite set is $P_0$, use the
two untouched large partite sets as the two three-vertex sides and a red vertex of
the touched large partite set as the apex.  If two large partite sets are occupied, use
the untouched large partite set as one side, $P_0$ together with the two remaining
vertices of one touched partite set as the other side, and a remaining vertex of
the other touched partite set as the apex.  Again the red graph contains
$K_{1,3,3}$, a contradiction.
\end{proof}

\section{Excluding every three-factor product extremizer}\label{sec:mainproof}

We now combine the preceding ingredients.

\begin{proof}[Proof of Theorem~\ref{thm:main}]
Set $\mathcal L_{180}$ as in (1.5).  Proposition~\ref{prop:parameter-lower}
gives finiteness, $p(\mathcal L_{180})=3$, and (1.2).  Also
$\mathcal C\subseteq\Free(\mathcal L_{180})$.

Fix a sufficiently large $n$ and let
$G\in\EX(n,\mathcal L_{180})$.  Suppose for a contradiction that
$\kappa(\overline G)\geq3$.  We split according to the sizes of the connected
components of $\overline G$.

Assume first that these components can be grouped into three collections,
each of total order at least four.  Their vertex unions form three join factors
between which every edge of $G$ is present.  Corollary~\ref{cor:finite-obstruction}
therefore gives $G\in\mathcal C$.  Since
$\mathcal L_{180}\subseteq\mathcal L(\mathcal C)$ and $G$ is
$\mathcal L_{180}$-extremal,
\[
 \ex(n,\mathcal L(\mathcal C))
 \leq e(G)=\ex(n,\mathcal L_{180})
 \leq \ex(n,\mathcal L(\mathcal C)).
 \tag{3.1}
\]
Thus $G\in\EX(n,\mathcal L(\mathcal C))$, contradicting
Proposition~\ref{prop:Cext}.

It remains to consider the case in which no such grouping is possible.
Call a component of $\overline G$ \emph{large} if it has at least four
vertices, and let $A$ be the graph induced by the union of the remaining
components.  There are at most two large components.  If there are two, the
total order of the small components is at most three.  If there is exactly one
large component and the small components had total order at least ten, then
greedily collecting them would produce one group of order between four and
six, leaving another group of order at least four.  This again contradicts
the assumed failure of grouping.  Hence, in this case, the small components
have total order at most nine.
If there is no large component and $n\geq16$, two successive greedy groups
of orders between four and six leave at least four vertices.  This produces
the forbidden grouping.  Since
$\kappa(\overline G)\geq3$, the union of the small components is nonempty,
and consequently
\[
 G=A\otimes X,\qquad 1\leq a:=v(A)\leq9,
 \tag{3.2}
\]
where $X$ is the subgraph of $G$ induced by the union of the one or two large
components of $\overline G$.

If $X$ contains $T$, choose one vertex of $A$.  Lemma~\ref{lem:universal-obstruction}
shows that the resulting copy of $K_1\otimes T$ is not in $\mathcal C$.
The $11$-vertex graph $K_1\otimes T$ therefore contains a member of
$\mathcal L(\mathcal C)$ on at most eleven vertices, and hence a member of
$\mathcal L_{180}$.  This contradicts
the $\mathcal L_{180}$-freeness of $G$.  Hence $X$ is $T$-free.

Using (2.3) and the fact that $a\leq9$, we obtain
\[
 \begin{aligned}
 e(G)
 &\leq \binom a2+a(n-a)+\ex(n-a,T)\\
 &\leq \binom a2+a(n-a)+t_3(n-a)+O(n)\\
 &=t_3(n)+O(n).
 \end{aligned}
 \tag{3.3}
\]
For large $n$ this contradicts (1.2).  Therefore
$\kappa(\overline G)\leq2$, proving (1.3) and the theorem.
\end{proof}

\subsection*{Concluding remarks}

The proof uses the finite family $\mathcal L_{180}$ only to force membership
in $\mathcal C$ when three join factors of order at least four are already
present.  It does not assert that every
$\mathcal L_{180}$-extremal graph belongs to $\mathcal C$, nor does it
classify those extremal graphs or determine $\ex(n,\mathcal L_{180})$
exactly.  The
integer $180$ is the transparent certificate bound
$3\cdot7\cdot8+12$; no optimality is claimed.

The construction also does not produce a single forbidden graph with the
same property.  Replacing the finite obstruction family by one graph while
preserving the conclusion for every extremal graph is a separate problem.


\begin{thebibliography}{99}

\bibitem{Brown66}
W. G. Brown,
On graphs that do not contain a Thomsen graph,
\emph{Canad. Math. Bull.} 9 (1966), 281--285.

\bibitem{ERS66}
P. Erd\H{o}s, A. R\'{e}nyi, and V. T. S\'{o}s,
On a problem of graph theory,
\emph{Studia Sci. Math. Hungar.} 1 (1966), 215--235.

\bibitem{ES46}
P. Erd\H{o}s and A. H. Stone,
On the structure of linear graphs,
\emph{Bull. Amer. Math. Soc.} 52 (1946), 1087--1091.

\bibitem{ES66}
P. Erd\H{o}s and M. Simonovits,
A limit theorem in graph theory,
\emph{Studia Sci. Math. Hungar.} 1 (1966), 51--57.

\bibitem{ES72}
P. Erd\H{o}s and M. Simonovits,
An extremal graph problem,
\emph{Acta Math. Acad. Sci. Hungar.} 22 (1971), 275--282.

\bibitem{FS13}
Z. F\"{u}redi and M. Simonovits,
The history of degenerate (bipartite) extremal graph problems,
in \emph{Erd\H{o}s Centennial}, Bolyai Soc. Math. Stud. 25,
Springer, 2013, 169--264.

\bibitem{GST98}
J. R. Griggs, M. Simonovits, and G. R. Thomas,
Extremal graphs with bounded densities of small subgraphs,
\emph{J. Graph Theory} 29 (1998), 185--207.

\bibitem{KST54}
T. K\H{o}v\'{a}ri, V. T. S\'{o}s, and P. Tur\'{a}n,
On a problem of K. Zarankiewicz,
\emph{Colloq. Math.} 3 (1954), 50--57.

\bibitem{PSY26}
X. Peng, G. Song, and L.-T. Yuan,
Tur\'{a}n number of nonbipartite graphs and the Product Conjecture,
\emph{Commun. Math. Stat.} 14 (2026), 205--218.

\bibitem{Simonovits83}
M. Simonovits,
Extremal graph problems and graph products,
in \emph{Studies in Pure Mathematics: To the Memory of Paul Tur\'{a}n},
Akad\'{e}miai Kiad\'{o} and Birkh\"{a}user, 1983, 669--680.

\bibitem{Simonovits97}
M. Simonovits,
Paul Erd\H{o}s' influence on extremal graph theory,
in R. L. Graham and J. Ne\v{s}et\v{r}il (eds.),
\emph{The Mathematics of Paul Erd\H{o}s II}, Algorithms Combin. 14,
Springer, 1997, 148--192.

\bibitem{SS19}
M. Simonovits and E. Szemer\'{e}di,
Embedding graphs into larger graphs: results, methods, and problems,
in I. B\'{a}r\'{a}ny, G. O. H. Katona, and A. Sali (eds.),
\emph{Building Bridges II: Mathematics of L\'{a}szl\'{o} Lov\'{a}sz},
Bolyai Soc. Math. Stud. 28, Springer, 2019, 445--592.

\bibitem{Xu26}
C. Xu,
A finite forbidden family with superlinear surplus and non-join extremal
graphs,
arXiv:2608.02115, 2026.

\end{thebibliography}
\end{document}